\renewcommand{\le}{\leqslant}
\renewcommand{\ge}{\geqslant}
\newcommand{\R}{\mathbb R}
\newtheorem{theorem}{Theorem}
\newtheorem{lemma}{Lemma}
\newtheorem{prop}{Proposition}
\newtheorem{defi}{Definition}
\newtheorem{question}{Question}
\begin{document}
\title{The right acute angles problem?}
\author{Andrey Kupavskii\footnote{Moscow Institute of Physics and Technology, IAS Princeton ; Email: {\tt kupavskii@ya.ru} \ \ Research supported by the grant of the Russian Government N 075-15-2019-1926.}, Dmitriy Zakharov\footnote{Higher School of Economics, Email: {\tt s18b1\_zakharov@179.ru}}}
\date{}
\maketitle

\begin{abstract}
The Danzer--Gr\"unbaum acute angles problem asks for the largest size of a set of points in $\R^d$ that determines only acute angles. There has been a lot of progress recently due to the results of the second author and of Gerencs\'er and Harangi, and now the problem is essentially solved.

 In this note, we suggest the following variant of the problem, which is one way to ``save'' the problem. Let $F(\alpha) = \lim_{d\to \infty} f(d,\alpha)^{1/d}$, where $f(d,\alpha)$ is the largest number of points in $\R^d$ with no angle greater than or equal to $\alpha$.  Then the question is to find $c:= \lim_{\alpha\to \pi/2^-}F(\alpha).$   It is an intriguing question whether $c$ is equal to $2$ as one may expect in view of the result of  Gerencs\'er and Harangi. In this paper we prove the  lower bound $c\ge \sqrt 2$.

 We also solve a related problem of Erd\H os and F\"uredi on the ``stability'' of the acute angles problem and refute another conjecture stated in the same paper.

\end{abstract}

\section{Introduction}

A set of points $X \subset \R^d$ is called {\it acute} ({\it non-obtuse}) if any three points from $X$ form an acute (acute or right, respectively) triangle. In 1962, Danzer and Gr\"unbaum  \cite{DG} confirmed a conjecture of Erd\H os from 1957 that any non-obtuse set of points in $\R^d$  has cardinality at most $2^d$, moreover, the only examples of non-obtuse sets of cardinality $2^d$ are the hypercube and some of its affine images. They then modified the question and asked to determine the maximum size $f(d)$ of an acute set in $\R^d$ for any $d \ge 2$. Danzer and Gr{\" u}nbaum  obtained the first bounds on $f(d)$:
\begin{equation} \label{dg}
2d-1 \le f(d) \le 2^d-1,
\end{equation}
where the upper bound immediately follows from the aforementioned result on non-obtuse sets. They conjectured that the lower bound is tight.

As it turned out recently, the value of $f(d)$ is actually very close to the upper bound in \eqref{dg}. While the only improvement upon the upper bound in \eqref{dg} made so far is the inequality $f(3) \le 5$ proved in \cite{C},  there were numerous improvements for the lower bound. The only values of $f(d)$ that are known at the moment are $f(2) = 3$ and  $f(3) = 5$, and the latter is the only known improvement of the upper bound (\ref{dg}), due to Croft
\cite{C}. 

In 1983, Erd{\H o}s and F{\" u}redi \cite{EF} provided a probabilistic construction of an acute set with $[\frac{1}{2} (\frac{2}{\sqrt{3}})^d]$ points, thus disproving the conjecture of Danzer and Gr\"unbaum. The underlying idea was to consider a random subset of the vertices of the hypercube $\{0, 1\}^d$ (see the next section for details). In the years  1983-2009, the improvements of the lower bound were very moderate: the constant $\frac{1}{2}$ in front of the exponential term  $(\frac{2}{\sqrt{3}})^d$ was improved in several steps, resulting in the inequality $f(d) \gtrsim 0.942 \cdot (\frac{2}{\sqrt{3}})^d$ \cite{B, Bu}. In 2009, Ackerman and Ben-Zwi \cite{AB} improved the Erd{\H os}--F{\" u}redi bound by a factor of $c\sqrt{d}$ using a certain general result concerning the independence numbers of sparse hypergraphs. In 2001, Harangi \cite{H} made the first exponential improvement: the constant  $\frac{2}{\sqrt{3}} \approx 1.155$ was replaced by $(\frac{144}{23})^{0.1} \approx 1.201$. Harangi's idea was to  consider random subsets of the set of the form $X_0^n \subset \R^{d_0 n}$, rather than $\{0, 1\}^d$, as it was done in the proof by Erd{\H o}s and F{\" u}redi. Here, $X_0 \subset \R^{d_0}$ is a low-dimensional acute set, which is typically constructed by hand or with the help of computer. For example, if one takes $X_0$ to be an acute triangle on the plane then one gets the bound $f(d) \gtrsim 1.158^d,$ which is slightly better than the Erd\H os--F\"uredi bound. Harangi used a $12$-point acute subset of $\R^5$ in his proof.

The next round of development was triggered in the spring of 2017, when the first explicit exponential acute sets were constructed by the second author \cite{Z}. The obtained bound on $f(d)$ was also much better than the previously known ones: $f(d) \ge F_{d+1} \approx 1.618^d$, where $F_d$ is the $d$-th Fibonacci number.\footnote{Here  $F_0=F_1=1$.} The proof used induction and certain slight perturbations of the point set to make the right angles in the arising product-type constructions acute. In the fall of 2017 Gerencs{\' e}r and Harangi \cite{GH} proved that \begin{equation}\label{eqgh} f(d) \ge 2^{d-1}+1.\end{equation} The proof was inspired by constructions of $9$-point and $17$-point acute sets in $\R^4$ and $\R^5$, respectively, made by an Ukranian mathematics enthusiast. The idea of Gerencs\' er and Harangi's bound is to carefully perturb  the vertices of the hypercube $\{0, 1\}^{d-1}$ using one extra dimension to get rid of all right angles. One extra point can then be added to the construction.

One common feature of all known explicit exponential-sized constructions is that the largest angle among the points is just barely smaller than $\frac{\pi}{2}$, and the constructions break down completely if we require the largest angle to be, say $\frac{\pi}2-0.001$.  On the other hand, as we shall see below, random constructions can be usually modified so that the largest angle would be separated from $\frac{\pi}{2}$. This suggests a certain interesting direction for research, but let us first introduce a couple of definitions.


\begin{defi}
Denote by $f(d, \alpha)$ the size of the largest set of points in $\R^d$ with no three points forming an angle at least $\alpha$. Put
\begin{equation}
    F(\alpha) := \limsup_{d \rightarrow \infty} f(d, \alpha)^{1/d}.
\end{equation}
\end{defi}
Thus, for instance, $f(d) = f(d, \frac{\pi}{2})$, and the result of  Gerencs{\' e}r--Harangi now implies that $F(\frac{\pi}{2}) = 2$. In \cite{Kup}, the first author showed that $\lim_{\alpha\to \pi/2^+}f(d,\alpha) = 2^{d}$.

 Note that $f(d, \alpha)$ is meaningful only for $\alpha \in [\frac{\pi}{3}, \pi]$ since $f(d, \alpha) = 2$ for any $\alpha \le \frac{\pi}{3}$. Some further results about $f(d, \alpha)$ for $\alpha$ close to $\frac{\pi}{3}$ or to $\pi$ can be found in \cite{EF}. 

Results of Erd{\H o}s--F{\" u}redi \cite[Theorem 3.6]{EF} translate to the following:
\begin{align}
    F\big(\frac{\pi}{3}+\delta\big) \in \big[1+\delta^2, 1+4\delta\big].
\end{align}

In the range $\alpha > \frac{\pi}{2}$ it turns out that $f(d, \alpha)$ grows surprisingly fast. The following result is essentially due to Erd{\H o}s--F{\"u}redi \cite[Theorem 4.3]{EF} but their formulation applies only to $\alpha$ close enough to $\pi$ (note that the condition that $n$ is sufficiently large is missing in the statement of \cite[Theorem 4.3]{EF}).  

\begin{prop}\label{pr1}
For any $\alpha \in (\frac{\pi}{2}, \pi)$ there are constants $C, c > 1$ such that for all sufficiently large $d$
\begin{equation}
    2^{c^{d}} < f(d, \alpha) < 2^{C^{d}}.
\end{equation}
\end{prop}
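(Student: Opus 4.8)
The statement has two halves, which I expect to need quite different arguments; throughout I write $\alpha=\pi-\beta$ with $\beta\in(0,\tfrac\pi2)$, and I anticipate the upper bound to be essentially routine (close to Erd\H os--F\"uredi's own argument) and the lower bound to carry the real content. For the \emph{upper bound} the plan is a Ramsey colouring. First cover the space $\mathbb{RP}^{d-1}$ of lines through the origin in $\R^d$ by $M=M(d,\beta)\le (C_0/\beta)^d$ sets of angular diameter less than $\beta/2$ (a routine spherical-cap covering estimate). Given an $\alpha$-good set $X\subset\R^d$, colour each pair $\{x,y\}\in\binom X2$ by the class containing the line through $x$ and $y$. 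If some triple $\{x,y,z\}$ were monochromatic, the lines $xy,xz,yz$ would be pairwise within angle $\beta/2$, so each of the three angles of the triangle $xyz$ would be either less than $\beta/2$ or greater than $\pi-\beta/2$; since they sum to $\pi$ and $\beta<\tfrac\pi2$, at least one would exceed $\pi-\beta/2>\pi-\beta=\alpha$, contradicting $\alpha$-goodness. So the colouring has no monochromatic triangle, whence $|X|<R_M(3)$; combining the classical bound $R_M(3)\le\lceil e\cdot M!\rceil\le 2^{O(M\log M)}$ with $M\le(C_0/\beta)^d$ yields $f(d,\alpha)<2^{C^d}$ for a suitable $C=C(\beta)>1$ and all sufficiently large $d$.

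For the \emph{lower bound}, monotonicity of $f(d,\cdot)$ does not help, as the hard regime is $\alpha$ close to $\tfrac\pi2$, so the plan is a construction valid for all $\alpha>\tfrac\pi2$. Put $\gamma=\alpha-\tfrac\pi2>0$ and $\delta=\sin(\gamma/3)$, and fix a near-orthogonal system $v_1,\dots,v_m\in\R^d$ of unit vectors with $|\langle v_i,v_j\rangle|\le\delta$ for $i\ne j$; a random $\pm\tfrac1{\sqrt d}$-vector construction followed by a deletion step gives $m\ge 2^{c_1 d}$ for some $c_1=c_1(\gamma)>0$. Split these vectors into $t=\lfloor m/2\rfloor$ pairs $\{v^{(\ell)}_1,v^{(\ell)}_2\}$, fix a large constant $M$, set $\mu=1/M$, and to each string $a=(a_1,\dots,a_t)\in\{1,2\}^t$ assign the point
\[
  p(a)=\sum_{\ell=1}^{t}\mu^{\ell-1}M^{a_\ell}\,v^{(\ell)}_{a_\ell}\ \in\ \R^d .
\]
These $2^t$ points are distinct, and the claim is that for $M$ large enough the resulting set has no angle $\ge\alpha$. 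Since $2^t\ge 2^{2^{c_1 d-2}}>2^{c^d}$ with $c=2^{c_1/2}>1$ for $d$ large, this proves the proposition.

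The angle analysis is the crux. The key step is: if $p(a)\ne p(b)$ and $\ell$ is the first index with $a_\ell\ne b_\ell$, then (since $\mu=1/M$ forces the level-$\ell$ contribution $\mu^{\ell-1}M^{2}v^{(\ell)}_s$, with $s$ the larger of $a_\ell,b_\ell$, to exceed every other contribution to $p(a)-p(b)$ by a factor $\gtrsim M$) the direction of $p(a)-p(b)$ lies within angle $O(1/M)$ of $\pm v^{(\ell)}_s$, the sign recording which of $a,b$ realizes that larger entry. For a triple $a,b,c$, two of the three ``first-difference levels'' coincide, equal to the first level at which the three strings are not all equal, while the third is at least as large; a short case analysis then shows that, up to error $O(1/M)$, each of the three angles of the triangle $p(a)p(b)p(c)$ is either $0$ or the angle between two distinct vectors of our system, hence within $\arcsin\delta<\gamma/3$ of $\tfrac\pi2$. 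Picking $M$ with $O(1/M)<\gamma/3$ makes every angle $<\tfrac\pi2+\gamma=\alpha$.

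The main obstacle, and the reason for the elaborate ``geometrically decaying scales with a fresh orthogonal pair of directions at every level'' design, is to defeat the failure mode of naive self-similar constructions: a ``hypercube of hypercubes'' placed at a single scale has points lying almost exactly \emph{between} two others, creating angles near $\pi$. Preventing this forces the displacement vectors at different scales to be nearly orthogonal, which is exactly why one must use $2^{\Omega(d)}$ --- not merely $O(d)$ --- near-orthogonal directions, and that is precisely what upgrades the singly exponential count $2^{O(d)}$ into the doubly exponential $2^{2^{\Omega(d)}}$. I expect the only genuinely fiddly part to be checking all cases of that triangle analysis (in particular that no angle of any triangle ever approaches $\pi$, not just the one we ``look at''), together with the bookkeeping of constants needed to keep the doubly exponential growth, and the value $c>1$, at the end.
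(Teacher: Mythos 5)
Your proposal is correct in both halves, and the lower bound is essentially the paper's construction in different clothing: the paper takes near\mbox{-}orthogonal unit vectors $v_1,\dots,v_m$ (angles in $(\tfrac\pi2-\varepsilon,\tfrac\pi2+\varepsilon)$ with $2\varepsilon=\alpha-\tfrac\pi2$) and the $2^m$ points $v_I=\sum_{t\in I}\lambda^t v_t$ for $I\subset[m]$ with $\lambda$ large, so that $v_I-v_J\approx\pm\lambda^t v_t$ with $t=\max(I\Delta J)$ and the signs at a shared vertex automatically agree --- exactly your ``first-difference level snaps the direction to $\pm v$'' analysis, including the consistent-sign observation at the vertex where the two extremal indices coincide. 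Your two-vectors-per-level gadget is an over-engineering of this: only $v^{(\ell)}_2$ ever appears as a dominant direction, and setting $v^{(\ell)}_1=0$ recovers the paper's subset-sum form verbatim (with decaying rather than growing scales), so you spend a factor $2$ of directions for nothing, which is harmless. The upper bounds share the same geometric core (cover the directions by $C^d$ angular caps and colour pairs of points accordingly) but finish with genuinely different combinatorial lemmas. The paper colours \emph{ordered} pairs $(x,y)$ by a cap containing $x-y$ up to angle $\tfrac{\pi-\alpha}2$ and proves by a short induction that any $m$-colouring of the ordered pairs of more than $2^m$ points contains a monochromatic oriented $2$-path $x\to y\to z$, which forces the angle at $y$ to be at least $\alpha$; you colour \emph{unordered} pairs by the line's cap and invoke the multicolour Ramsey bound $R_M(3)\le\lceil e\,M!\rceil$ to produce a near-degenerate monochromatic triangle. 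Both yield $f(d,\alpha)<2^{C^d}$; the paper's route is quantitatively sharper ($2^M$ versus roughly $M!$, though this is absorbed into $C$) and self-contained, while yours outsources the combinatorics to a classical black box. No gaps --- only the minor bookkeeping you already flagged (e.g.\ $\arcsin\delta=\gamma/3$ exactly, and the $O(1/M)$ error enters twice per angle) remains to be written out.
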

Note that Proposition \ref{pr1} refutes Conjecture 2.13 from the very same paper \cite{EF}.


Now we can formulate our main question.

\begin{question}
Is it true that
\begin{equation}
    \lim_{\alpha \rightarrow \pi/2^-} F(\alpha) = 2?
\end{equation}
Equivalently, is it true that for any $\varepsilon > 0$ there is $\delta > 0$ so that for any sufficiently large $d$ there is a set $X \subset \R^d$ of cardinality at least $(2 - \varepsilon)^d$  such that any three points from $X$ determine an angle less than $\frac{\pi}{2} - \delta$? \end{question}

Although the problem is very close to the acute angles problem, the current methods that use explicit constructions fail completely, and the gap between the bounds is still exponential. We  prove the following lower bound in this paper.

\begin{theorem} \label{t1} We have
\begin{equation}
    \lim_{\alpha \rightarrow \pi/2^-} F(\alpha) \ge \sqrt{2}.
\end{equation}
 That is for every $\varepsilon > 0$ there exists $
 \delta>0$ such that for any sufficiently large $d$ there is a set $X \subset \R^d$ of cardinality at least $(\sqrt{2} - \varepsilon)^d$ determining only angles less than $\frac{\pi}{2} - \delta$.
\end{theorem}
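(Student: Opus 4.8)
The plan is to combine the Erd\H os--F\"uredi probabilistic template with a perturbation in extra coordinates. Write the ambient dimension as $d=n+m$. The starting point is the elementary fact that for a uniformly random triple $x,y,z\in\{0,1\}^n$ one has $\cos\angle xyz=|A\cap B|/\sqrt{|A||B|}$ with $A=\{i:x_i\neq y_i\}$, $B=\{i:z_i\neq y_i\}$; since $|A|,|B|$ concentrate around $n/2$ and $|A\cap B|$ around $n/4$, the typical angle of the cube is already close to $\pi/3$. Consequently, if one takes a random subset of $\{0,1\}^n$ and deletes a point from every triple whose angle is at least $\pi/2-\delta$, the surviving set is automatically bounded away from $\pi/2$, and a standard large--deviation estimate gives that the density of these ``bad'' triples is $(3/4)^{n(1-o_\delta(1))}$. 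This already yields $F(\pi/2-\delta)\ge 2/\sqrt3-o_\delta(1)$, hence $\lim_{\alpha\to\pi/2^-}F(\alpha)\ge 2/\sqrt3\approx 1.15$; the whole point is to raise the exponential base to $\sqrt2\approx 1.41$.

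To do that I would not take a subset of a single cube, but instead work with points $\bigl(v,\psi(v)\bigr)\in\R^{n+m}$, where $v$ ranges over a random subset $Y\subseteq\{0,1\}^n$ and $\psi\colon Y\to\R^m$ is an auxiliary map, with $m=\Theta(n)$ and the ratio $n/(n+m)$ pushed as close to $1/2$ as the analysis allows. After the lift,
\[
\cos\angle\ =\ \frac{\,|A\cap B|+\langle\psi(u)-\psi(v),\,\psi(w)-\psi(v)\rangle\,}{\sqrt{\bigl(|A|+\|\psi(u)-\psi(v)\|^2\bigr)\bigl(|B|+\|\psi(w)-\psi(v)\|^2\bigr)}},
\]
so the only triples that can still be close to right are the near--right triples of the cube part (those with $|A\cap B|$ abnormally small, of which there are about $6^{\,n}$ in $\{0,1\}^n$ and about $|Y|^3(3/4)^n$ inside $Y$). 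For such a triple one needs the cross term to be positive and of order $d$. Choosing $\psi$ at random on the scale $\|\psi(v)\|\sim\sqrt n$, the cross term has positive mean of order $n$, and a chi--square concentration bound controls the probability that it is too small by $e^{-cm}$; a first--moment bound then asks for $|Y|^3(3/4)^n e^{-cm}<1$, which one optimises in $|Y|\le 2^n$ and $m$.

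The main obstacle --- and the real content of the theorem --- is making this efficient enough. A purely i.i.d.\ perturbation only removes a constant fraction of the dangerous triples per extra coordinate, so $m$ must be several times $n$ and the rate $|Y|^{1/(n+m)}$ falls short of $\sqrt2$. The argument therefore has to use the perturbation much more cleverly: either let the coordinates of $\psi$ be (pseudo)random functions correlated with the Boolean structure (a Veronese-- or random--ordering--type lift), so that each new coordinate destroys a far larger share of the near--right angles, or interleave the perturbation with a pruning of $Y$. One then has to verify, simultaneously, that (i) every near--right triple acquires a cross term $\gtrsim\delta\,d$, (ii) the generic triples --- whose angles sit near $\pi/3$ --- are not spoiled, and (iii) $m$ stays at most $n$; balancing (i)--(iii) is the technical heart. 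Once the parameters close, one obtains for every $\varepsilon>0$ a $\delta>0$ and, for all large $d$, a set of $(\sqrt2-\varepsilon)^d$ points in $\R^d$ with all angles below $\pi/2-\delta$, which is Theorem~\ref{t1}.
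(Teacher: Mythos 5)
Your proposal does not close. You correctly identify that a random subset of $\{0,1\}^n$ plus deletion only gives base $2/\sqrt3$, and that some extra mechanism is needed to reach $\sqrt2$, but the mechanism you propose --- lifting the cube by a perturbation $\psi$ in $m$ extra coordinates --- is left unresolved. You state yourself that an i.i.d.\ perturbation falls short and that one must use the perturbation ``much more cleverly,'' listing conditions (i)--(iii) to be balanced without exhibiting any $\psi$ that balances them; ``once the parameters close'' is exactly the part that is missing. Moreover, the perturbative route is the one the paper's introduction explicitly warns against: the Gerencs\'er--Harangi perturbation of the cube produces angles only barely below $\pi/2$ and breaks down once a uniform gap $\delta$ is demanded, and it is far from clear that a perturbation of scale $\sqrt n$ living in $m=o(n)$ or even $m\le n$ extra coordinates can push all $|Y|^3(3/4)^n$ near-right triples of a set of size $|Y|\approx(\sqrt2)^{n+m}$ a distance $\delta d$ away from orthogonality while leaving the generic triples intact.

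The paper's argument avoids perturbation altogether. Fix a large $d_0$ and let $X_0\subset\R^{d_0}$ be the Gerencs\'er--Harangi acute set of size $2^{d_0-1}+1$; let $s>0$ be the minimum of $\langle x-y,x-z\rangle$ over triples in $X_0$ and $R$ its diameter. Take $2m$ uniformly random points of the power $X_0^n\subset\R^d$, $d=nd_0$, with $m=2^{\frac{1-\varepsilon}{2}nd_0}=(\sqrt2)^{(1-\varepsilon)d}$. Blockwise, each of the $n$ coordinate blocks contributes either $0$ (if a coincidence $p_{it}=p_{jt}$ or $p_{it}=p_{kt}$ occurs) or at least $s$ to $\langle p_i-p_j,p_i-p_k\rangle$, and a coincidence in a block has probability at most $2/|X_0|\approx 2^{1-d_0}$ rather than the $3/4$ of the binary cube. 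Hence a triple has scalar product below $\frac{\varepsilon}{2}ns$ with probability at most $\binom{n}{\varepsilon n/2}(2/|X_0|)^{(1-\varepsilon/2)n}$, the expected number of bad triples is $o(m)$, and deleting one point per bad triple leaves at least $m$ points all of whose angles $\theta$ satisfy $\cos\theta\ge\frac{\varepsilon s}{2R^2}$, a bound depending only on $\varepsilon$. This is the idea your sketch is missing: replace the two-point factor $\{0,1\}$ by an exponentially large acute factor, after which the plain deletion method already yields base $\sqrt2$ together with a uniform angular gap.
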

Our proof is a combination of the method of Erd{\H o}s--F{\" u}redi with the recent construction of acute sets by Gerencs{\' e}r--Harangi.

The second result gives a non-trivial upper bound on $F(\alpha)$ for any $\alpha<\pi/2$.
\begin{theorem} \label{t2}
For $\alpha>0$ small enough we have $F(\frac{\pi}{2} - \alpha) \le 2 - \alpha^2$.
\end{theorem}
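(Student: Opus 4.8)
The right framework ought to be a volume/packing argument in the spirit of Danzer--Gr\"unbaum, refined so that the packing ratio beats $\tfrac12$. Fix a small $\alpha>0$ and a set $X=\{x_1,\dots,x_n\}\subset\R^d$ all of whose angles are $\le\tfrac\pi2-\alpha$, and let $P=\mathrm{conv}(X)$, which we may assume to be full-dimensional (otherwise pass to its affine hull). Two structural facts follow from the hypothesis by elementary triangle estimates: (i) any two points of $X$ are at distance at least $\sin\alpha\cdot\mathrm{diam}(X)$, since a very thin triangle would have a near-right base angle; and (ii) every $x_i$ is a vertex of $P$ and in fact lies at distance at least $\sin^2\alpha\cdot\mathrm{diam}(X)$ from $\mathrm{conv}(X\setminus\{x_i\})$. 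In particular $X$ is in strictly convex position. Now put $t:=\tfrac1{2-\alpha^2}>\tfrac12$ and $P_i:=x_i+t(P-x_i)\subseteq P$. The plan is to show that the $n$ homothets $P_i$ have pairwise disjoint interiors; since each has volume $t^d\mathrm{vol}(P)$, this forces $n\,t^d\le1$, i.e.\ $n\le(2-\alpha^2)^d$, which gives $F(\tfrac\pi2-\alpha)\le2-\alpha^2$.

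To prove disjointness I would argue by contradiction, extending the Danzer--Gr\"unbaum computation. If $z\in\mathrm{int}(P_i)\cap\mathrm{int}(P_j)$ for $i\neq j$, then $p:=x_i+\tfrac1t(z-x_i)$ and $q:=x_j+\tfrac1t(z-x_j)$ both lie in $\mathrm{int}(P)$, and eliminating $z$ yields the relation $\langle e,p-x_i\rangle-\langle e,q-x_j\rangle=|e|^2/t$, where $e:=x_j-x_i$. Expanding $p-x_i=\sum_m\lambda_m(x_m-x_i)$ and $q-x_j=\sum_m\mu_m(x_m-x_j)$ as convex combinations of edge vectors and invoking the quantitative acuteness inequality $\langle x_a-x_b,x_c-x_b\rangle\ge\sin\alpha\,|x_a-x_b|\,|x_c-x_b|$ at $x_i$ and at $x_j$, one obtains $\langle e,p-x_i\rangle\le|e|^2$ and, via the displayed relation, $\langle e,q-x_j\rangle\le-(1-\alpha^2)|e|^2$; combining these with the extremality of the vertex $x_l\in X$ minimizing $\langle e,\cdot\,\rangle$ over $P$ (respectively $x_k$ maximizing it) reproduces the classical conclusion that some angle of $X$ at $x_i$ or $x_j$ is obtuse, as soon as $t=\tfrac12$. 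The new content is to absorb the slack $\tfrac1t<2$: concretely, one needs $\langle e,p-x_i\rangle<(1-\alpha^2)|e|^2$, i.e.\ that the negative terms $-\lambda_i|e|^2-\sin\alpha\,|e|\sum_{m\ne i,j}\lambda_m|x_m-x_j|$ beat $-\alpha^2|e|^2$, using (i) to bound $|x_m-x_j|$ from below and (ii) to keep $\lambda_i$ (and the analogous $\mu$'s) away from the degenerate values $0$ and $1$.

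This last step is the genuine obstacle. The crude bound from (i) alone falls short by essentially a factor $\sin^2\alpha$ versus $1$, so the hypothesis ``all angles $\le\tfrac\pi2-\alpha$'' must be exploited more globally than one triangle at a time --- for instance through the depth estimate (ii), or by selecting the auxiliary extreme vertices $x_k,x_l$ not as the plain extremizers of $\langle e,\cdot\,\rangle$ but of a functional rescaled to the length $|x_i-x_j|$, so that $x_k,x_l$ are pinned at distance comparable to $|x_i-x_j|$ rather than to $\mathrm{diam}(X)$ (this matters precisely for pairs with $|x_i-x_j|\asymp\sin\alpha\cdot\mathrm{diam}(X)$, which is where the naive argument breaks). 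Equivalently, one may phrase the goal as showing that $\bigcup_i\bigl(x_i+\tfrac12(P-x_i)\bigr)$ occupies at most a $(1-\tfrac{\alpha^2}{2})^d$ fraction of $P$ --- that is, that the union $X+P$ of the vertex-translates of $P$ fills at most that fraction of $2P$ --- a reformulation with a Brunn--Minkowski flavour that may be more amenable. A secondary, routine, matter is to verify that the selected vertices are distinct from $x_i,x_j$ so that acuteness applies to them, and the reduction to the full-dimensional case. Optimising $t$ as a function of $\alpha$ should in fact give $F(\tfrac\pi2-\alpha)\le 2\cos\alpha+o(\alpha^2)$, of which $2-\alpha^2$ is a convenient weakening for small $\alpha$.
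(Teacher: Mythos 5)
Your overall framework (a quantitative Danzer--Gr\"unbaum volume argument with homothety ratio $t>\frac12$) is the right starting point, and you correctly single out the disjointness of the homothets as the crux. But the plan cannot be completed as stated, because the central claim is \emph{false}: for any full-dimensional polytope $P=\mathrm{conv}(X)$, any two vertices $x_i,x_j$, and any $t>\frac12$, the homothets $P_i=x_i+t(P-x_i)$ and $P_j=x_j+t(P-x_j)$ have intersecting interiors, no matter how acute $X$ is. Indeed, $P_i$ contains the segment from $x_i$ to $x_i+t(x_j-x_i)$ and $P_j$ contains the segment from $x_i+(1-t)(x_j-x_i)$ to $x_j$; since $t>1-t$ these overlap in a nondegenerate segment $S$. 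Any hyperplane separating $P_i$ from $P_j$ would have to contain $S$, hence the whole line through $x_i$ and $x_j$, hence both centers $x_i,x_j$; but a supporting hyperplane of $P_i$ through its homothety center $x_i$ is a supporting hyperplane of $P$ itself, and likewise for $P_j$ from the other side, forcing $P$ into a hyperplane --- a contradiction. In your notation the failure occurs exactly where you place it: the convex combination $p=\sum_m\lambda_m x_m$ may put all its weight on $x_j$, where $\langle e,x_j-x_i\rangle=|e|^2$ holds with equality and no acuteness gain is available. This is not a factor-$\sin^2\alpha$ shortfall that a better choice of auxiliary extremizers could absorb; the statement you are trying to prove is wrong, so the speculative fixes in your last paragraph cannot succeed either.

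The paper's resolution changes the geometry of the packing rather than sharpening the estimate. One packs homothets not of $P$ but of $\mathrm{conv}(A)$ for a subset $A\subset X$, translated towards centers ranging over a set $C$ \emph{disjoint} from $A$: the bodies are $(1-\lambda)x+\lambda\,\mathrm{conv}(A)\subset\mathrm{conv}(A\cup C)$ for $x\in C$. Because every point of $\mathrm{conv}(A)$ is a convex combination of points of $X$ distinct from both centers $x$ and $z$, the strict two-sided bound $1.5\varepsilon^2\|z-x\|^2<\langle y-x,z-x\rangle<(1-1.5\varepsilon^2)\|z-x\|^2$ (with $\varepsilon=\sin\alpha$) applies to every term of the combination, the degenerate case $p=x_j$ never occurs, and disjointness goes through with $\lambda=\frac12(1-1.5\varepsilon^2)^{-1}>\frac12$. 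The price is that the volume comparison is now between $\mathrm{Vol}(\mathrm{conv}\,A)$ and $\mathrm{Vol}(\mathrm{conv}(A\cup C))$, and one needs these to differ by at most a constant factor while $|C|$ remains at least $|X|/\mathrm{poly}(d)$; that is exactly what Lemma~\ref{lm} of the paper supplies, via Carath\'eodory's theorem and a pigeonhole argument along a chain $X_0\subset X_1\subset\dots\subset X$. This splitting of $X$ into $A$ and $C$ is the missing idea in your write-up.
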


Theorem \ref{t2} confirms a conjecture of Erd{\H o}s--F{\" u}redi  \cite[Conjecture 3.5]{EF}. The proof is a  modification of the proof of the inequality $f(d) \le 2^d$ due to Danzer and Gr{\" u}nbaum. Namely, their proof is based on the observation that if $X$ is an acute set and $P = {\rm conv}(X)$ is the convex hull of $X$ then interiors of homothets $\frac{P + x}{2}$, $x \in X$, are pairwise disjoint. Considering the volumes one easily obtains the bound $|X| \le 2^d$.
The idea behind the proof of Theorem \ref{t2} is to take two disjoint subsets $A, C \subset X$ and consider sets of the form $\lambda \,{\rm conv} (A) + (1-\lambda) c \subset {\rm conv}(A \cup C)$, where $c \in C$. One can show that these sets are pairwise disjoint provided (i) all the angles in $X$ are less than $\frac{\pi}{2}-\alpha$ and (ii) $\lambda$ is chosen appropriately. One then obtains an inequality $\lambda^d {\rm Vol}({\rm conv\,} A) |C| \le {\rm Vol}({\rm conv\,}A \cup C)$. Lemma \ref{lm} implies that one can choose $A$ and $C$ in such a way that ${\rm Vol}({\rm conv\,}A)$ and ${\rm Vol}({\rm conv\,}A \cup C)$ are almost the same and $|C|$ is comparable to $|X|$, which completes the proof.

\section{The proofs}
\begin{proof}[Sketch of the proof of Proposition~\ref{pr1}] To prove the lower bound, we construct a set $\{v_1, \ldots, v_m\}$ of $m \ge c^d$ unit vectors in $\R^d$ such that the angle between any two of them lies in $(\frac{\pi}{2}-\varepsilon, \frac{\pi}{2}+\varepsilon)$, where $2\varepsilon = \alpha - \frac{\pi}{2}$. This can be done by taking a random subset on the unit sphere and applying a concentration inequality (see, for instance, \cite[Chapter 14]{M}). Now  take a sufficiently large number $\lambda$ and consider the set $X = \{ v_I = \sum_{t \in I} \lambda^t v_t~|~ I \subset [m]\}$. Note that $|X| = 2^{c^d}$. For any two points $v_I, v_J \in X$ we have $v_I - v_J \approx \pm \lambda^t v_t$, where $t$ is the largest element of $I \Delta J$. So the angle between $v_I-v_J$ and $v_I- v_K$ is approximately equal to the angle between some vectors $\pm v_i$ and $\pm v_j$, and therefore, it is at most $\alpha$.

To prove the upper bound, we construct a set $\{v_1, \ldots, v_m\}$ of $m \le C^d$ vectors such that any vector determines an angle less than $\frac{\pi - \alpha}{2}$ with one of them. This can be done by a greedy algorithm or deduced from known results for the sphere packing problem. Take a set $X$ of more than $2^{m}$ points. For $x,y\in X$, color a pair $(x, y), x\neq y$, in color $i$ if the angle between $v_i$ and $x-y$ is at most $\frac{\pi - \alpha}{2}$.
In what follows, we show that, since $|X| > 2^m$, there exists a triple $x, y, z$ such that $(x, y)$ and $(y, z)$ received the same color (i.e., there is a monochromatic oriented $2$-path). But then the angle between $y-x$ and $y-z$ is at least $\alpha$.

We show that such a triple exists by induction on $m$. The statement is clear for $m=1$ and $|X| = 3$. Next, for $m$-colorings, take any color, say, red, and consider all edges of this color. If there is no red oriented $2$-path, then each vertex either has only incoming or only outgoing red edges, and so red edges span a bipartite graph. (We are free to assign vertices with no incident red edge to any of the two parts.) Take the bigger part of this bipartite graph. It has size at least $\lceil (2^m+1)/2\rceil = 2^{m-1}+1$ and is colored with $m-1$ colors. Thus it contains a monochromatic oriented $2$-path.
\end{proof}

\begin{proof}[Proof of Theorem 1]
Fix an arbitrary $\varepsilon > 0$. Take a sufficiently large $d_0$ and an acute set $X_0 \subset \R^{d_0}$ of size $2^{d_0-1}+1$ (which exists by \eqref{eqgh}). Let $R > 0$ be the diameter of $X_0$ and denote by $s$ the smallest scalar product $\langle x - y, x - z\rangle$ over all triples $x, y, z \in X_0$ such that $x \neq y, z$. By the definition of an acute set, we have $s > 0$.

W.l.o.g., assume that $d_0$ divides $d$. Let $m = 2^{\frac{1-\varepsilon}{2} nd_0}$ where $n = d/d_0$. Choose $2m$ uniformly random points $p_1, \ldots, p_{2m} \in X_0^n \subset \R^{d_0n}$, and set $p_i = (p_{i1}, \ldots, p_{in})$. Let us estimate the expectation of the number of triples $(i, j, k)$ such that $\langle p_i - p_j, p_i - p_k\rangle \le \frac{\varepsilon}{2}  n s$.

If for some $i, j, k$ we have $\langle p_i - p_j, p_i - p_k\rangle \le \frac{\varepsilon}{2} n s$ then there are at least $(1-\frac{\varepsilon}{2})n$ coordinates $t \in \{1, \ldots, n\} $ for which  $p_{i t} = p_{j t}$ or $p_{i t} = p_{k t}$. The probability of the latter event is at most ${n \choose \frac{\varepsilon}{2}n} \left ( \frac{2}{|X_0|}  \right)^{(1-\frac{\varepsilon}{2})n} \le 2^{n - (1-\frac{\varepsilon}{2})(d_0 - 2)n}$. So the expectation of the number of such triples is at most
\begin{equation}
     (2m)^32^{n - (1-\frac{\varepsilon}2)(d_0 - 2)n} \le 8m 2^{(1 - \varepsilon)nd_0} 2^{-(1-\frac{\varepsilon}{2})nd_0+3n} \ll m.
\end{equation}
Thus there are points $p_1, \ldots, p_{2m}$ with at most $m$ ``bad'' triples. Remove one point from each of these triples and obtain a set $X \subset X_0^n \subset \R^{nd_0}$ of cardinality at least $m = \sqrt{2}^{(1-\varepsilon)nd_0}$ such that for any two points $x, y \in X$ we have $|x-y|^2 \le R^2 n$ and for any three points $x, y, z \in X$ we have $\langle x - y, x- z\rangle > \frac{\varepsilon}{2}ns$. This means that the angle $\alpha$ between vectors $x-y, x-z$ satisfies $\cos{\alpha} \ge \frac{\varepsilon}{2}s/R^2$ and thus depends on $\varepsilon$ only.
\end{proof}

In the proof of Theorem~\ref{t2}, we shall need the following lemma.
\begin{lemma}\label{lm}
Suppose $X \subset \R^d$, $|X| = N \ge d+1$ and the convex hull ${\rm conv}(X)$ has non-zero volume. Then for any $c \in [\frac{12d \log_2{N}}{N}, 1]$ there are sets $A \subset B \subset X$ such that \\
1. $|B \setminus A| \ge \frac{c}{3d \log_2{N}} N$.\\
2. $0 \neq {\rm Vol}({\rm conv}(B)) \le (1+c){\rm Vol}({\rm conv}(A)) $.

\end{lemma}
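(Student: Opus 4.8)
If the interval $[\frac{12d\log_2 N}{N},1]$ is empty there is nothing to prove, so assume it is nonempty; since $\log_2 N\ge 1$ this forces $N\ge 12 d\log_2 N\ge 12d$, in particular $d\le N/12$. The idea is to fix a good ``base'' simplex inside $X$ and then walk along a nested chain of prefixes of $X$, looking for a long stretch on which the volume barely grows.

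First I would pin down the base simplex. Let $A_0\subseteq X$ with $|A_0|=d+1$ be a subset maximizing ${\rm Vol}({\rm conv}(A_0))$; this is positive since ${\rm conv}(X)$ has positive volume. The key geometric claim is
\[ {\rm Vol}({\rm conv}(X))\le d^d\,{\rm Vol}({\rm conv}(A_0)). \]
To prove it, write the vertices of $A_0$ as $v_0,\dots,v_d$ and expand each $x\in X$ in barycentric coordinates, $x=\sum_i\lambda_i(x)v_i$ with $\sum_i\lambda_i(x)=1$. Replacing $v_j$ by $x$ produces a simplex on $d+1$ points of $X$ whose volume equals $|\lambda_j(x)|\,{\rm Vol}({\rm conv}(A_0))$ (a one-line multilinearity-of-determinant computation), so maximality of $A_0$ gives $\lambda_j(x)\in[-1,1]$ for all $j$. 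Hence ${\rm conv}(X)$ is contained in the image of $P=\{\lambda\in[-1,1]^{d+1}:\sum_i\lambda_i=1\}$ under the affine map $\lambda\mapsto\sum_i\lambda_i v_i$; substituting $\mu_i=1-\lambda_i$ (a volume-preserving map) carries $P$ into $\{\mu\ge 0:\sum_i\mu_i=d\}$, a copy of $d$ times the standard $d$-simplex, whose volume is $d^d$ times that of $A_0$. This is the main obstacle and the one place real geometry enters.

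Next, order $X=\{x_1,\dots,x_N\}$ so that $\{x_1,\dots,x_{d+1}\}=A_0$, and for $j\in\{d+1,\dots,N\}$ set $W_j={\rm Vol}({\rm conv}(\{x_1,\dots,x_j\}))$. Then $W_{d+1}>0$, the sequence $W_{d+1}\le\dots\le W_N$ is nondecreasing, and $W_N/W_{d+1}\le d^d$ by the claim. Define the level $\ell(j):=\lfloor\log_{1+c}(W_j/W_{d+1})\rfloor$, a nondecreasing integer with $0\le\ell(j)\le\log_{1+c}(d^d)=\tfrac{d\log_2 d}{\log_2(1+c)}\le\tfrac{d\log_2 d}{c}$, using $\log_2(1+c)\ge c$ for $c\in(0,1]$. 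So $\ell$ takes at most $\tfrac{d\log_2 d}{c}+1\le\tfrac{2d\log_2 N}{c}$ distinct values (here $d\log_2 d\le d\log_2 N$ and $c\le 1\le d\log_2 N$) on the $N-d$ indices $j\in\{d+1,\dots,N\}$. By pigeonhole and monotonicity of $\ell$, some value is attained on a block of consecutive indices $a\le a+1\le\dots\le b$ with $b-a+1\ge\tfrac{c(N-d)}{2d\log_2 N}$.

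Finally I would set $A=\{x_1,\dots,x_a\}$ and $B=\{x_1,\dots,x_b\}$. Then ${\rm Vol}({\rm conv}(A))=W_a\ge W_{d+1}>0$, and $\ell(a)=\ell(b)$ gives $W_b/W_a<1+c$, i.e.\ ${\rm Vol}({\rm conv}(B))\le(1+c){\rm Vol}({\rm conv}(A))$, which is property~2. For property~1, using $N-d\ge\tfrac{11}{12}N$ and then $cN\ge 12d\log_2 N$,
\[ |B\setminus A|=b-a\ge\frac{c(N-d)}{2d\log_2 N}-1\ge\frac{11cN}{24d\log_2 N}-1\ge\frac{cN}{3d\log_2 N}, \]
the last step because $\bigl(\tfrac{11}{24}-\tfrac13\bigr)\tfrac{cN}{d\log_2 N}=\tfrac{cN}{8d\log_2 N}\ge\tfrac{12d\log_2 N}{8d\log_2 N}>1$. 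Besides the geometric claim, the only delicate point is the constant bookkeeping needed to land exactly on the thresholds $\tfrac{12d\log_2 N}{N}$ and $\tfrac{c}{3d\log_2 N}N$ in the statement; everything else is monotonicity of volumes and a pigeonhole argument.
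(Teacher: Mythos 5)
Your proof is correct, and its overall skeleton is the same as the paper's: find a $(d+1)$-point subset whose hull has volume within a controlled multiplicative factor of ${\rm Vol}({\rm conv}(X))$, then run a pigeonhole over a nested chain from that subset up to $X$ to locate a long stretch where the volume grows by a factor less than $1+c$. The two sub-steps are implemented differently, though. For the base simplex, the paper uses Carath\'eodory's theorem plus pigeonhole over the $\binom{N}{d+1}$ simplices covering ${\rm conv}(X)$, which is a one-line argument yielding the ratio $N^{d+1}$; you instead take the maximum-volume inscribed simplex and prove the containment ${\rm Vol}({\rm conv}(X))\le d^d\,{\rm Vol}({\rm conv}(A_0))$ via barycentric coordinates --- a classical but genuinely harder fact that buys a better ratio ($d^d$ versus $N^{d+1}$), which is more than the lemma needs. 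For the chain, the paper prescribes blocks of size in $[\frac{c}{3d\log_2 N}N,\frac{c}{2d\log_2 N}N]$ directly and derives a contradiction from $(1+c)^m\ge N^{2d}$, whereas you add points one at a time and pigeonhole on the level $\lfloor\log_{1+c}(W_j/W_{d+1})\rfloor$; these are equivalent up to bookkeeping, and your constant-chasing at the end (absorbing the $-1$ using $cN\ge 12d\log_2 N$) checks out. In short: same strategy, with a heavier but quantitatively stronger choice for the first ingredient.
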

\begin{proof}
By Carath\'eodory's theorem, every point of ${\rm conv}(X)$ lies in the convex hull of some $d+1$ points of $X$, so by the pigeonhole principle, there is a set $X_0 \subset X$ of size $d+1$  such that
$$
{\rm Vol}({\rm conv}(X_0)) \ge {N \choose d+1}^{-1} {\rm Vol}({\rm conv}(X)) \ge N^{-d-1} {\rm Vol}({\rm conv}(X)).
$$
Take any chain $X_0 \subset X_1 \subset \ldots \subset X_m = X,$ such that $|X_{i+1} \setminus X_i| \in [\frac{c}{3d \log_2{N}} N, \frac{c}{2d \log_2{N}} N]$ (it is possible because of the restriction on $c$). We have $m \ge \frac{2d \log_2{N}}{c}$, so if we had ${\rm Vol}({\rm conv}(X_{i+1})) > (1+c){\rm Vol}({\rm conv}(X_i))$ for all $i$, then
$$
{\rm Vol}({\rm conv}(X)) > (1+c)^m {\rm Vol}({\rm conv}(X_0)) \ge 2^{2d \log_2{N}}{\rm Vol}({\rm conv}(X_0)) \ge {\rm Vol}({\rm conv}(X)),
$$
a contradiction.
\end{proof}

\begin{proof}[Proof of Theorem 2]
Take a set $X \subset \R^d$ which determines only angles at most $\frac{\pi}{2}- \alpha$ for a sufficiently small $\alpha>0$. Put $\varepsilon = \sin{\alpha}$. It is easy to see that for any three different points $x, y, z \in X$
\begin{equation}\label{ineq}
\langle y-x, z-x \rangle \ge \varepsilon\|y-x\|\|z-x\|> 1.5\varepsilon \|z-x\|^2,
\end{equation}
where the last inequality follows from the fact that $\frac{\|y-x\|}{\|z-x\|} = \frac{\sin\angle xzy}{\sin \angle zyx}> \sin \angle xzy\ge \sin 2\alpha >1.5\varepsilon$ for sufficiently small $\alpha$. Doing the same calculation for both $z-x$ and $x-z$ as the second vector in the scalar product in
\eqref{ineq}, we get that for any three distinct $ x,y,z$ we have
\begin{equation}\label{ineq2}
1.5 \varepsilon^2 \|z-x\|^2<\langle y-x, z-x \rangle < (1-1.5 \varepsilon^2) \|z-x\|^2.
\end{equation}

Applying Lemma 1 with $c = 1$ we get sets $A \subset B$ such that $0 \neq {\rm Vol}({\rm conv}B) \le 2{\rm Vol}({\rm conv}A)$ and $|B \setminus A| \ge \frac{|X|}{4d^2}$. Take $\lambda = \frac{1}{2} \cdot \left(1- 1.5\varepsilon^2\right)^{-1}$, from (\ref{ineq2}) we see that for any distinct $x, z \in B \setminus A$ we have $((1-\lambda)x+ {\rm conv} (\lambda A)) \cap ((1-\lambda)z+{\rm conv}(\lambda A)) = \emptyset$. Indeed, for any point $y$ from the first set we have $\langle y-x,z-x\rangle < \lambda (1-1.5\varepsilon^2)\|z-x\|^2 =  \frac 12\|z-x\|^2$, while for any $y'$ from the second set we have $\langle y'-x,z-x\rangle > (1-\lambda)\|z-x\|^2+\lambda\cdot 1.5 \varepsilon^2 \|z-x\|^2=\frac 12 \|z-x\|^2$.
Moreover, $(1-\lambda)x+ {\rm conv}(\lambda A)\subset {\rm conv} B$ for any $x\in B$, so
\begin{equation}
    |B\setminus A|\lambda^d {\rm Vol}({\rm conv}A) \le {\rm Vol}({\rm conv}B) \le 2 {\rm Vol}({\rm conv}A),
\end{equation}
thus
\begin{equation}
    |X| \le 4d^2|B\setminus A|\le 8d^2\lambda^{-d}=8d^2 2^d \left(1- 1.5\varepsilon^2\right)^d\le (2-\alpha^2)^d,
\end{equation}
provided that $d$ is sufficiently large and $\alpha>0$ is sufficiently small. (Here we used that $\lim_{\alpha\to 0^+} \frac{\sin \alpha}{\alpha} = 1$.)
\end{proof}

{\sc Acknowledgements: } We thank the reviewers for carefully reading the manuscript and suggesting numerous changes that helped to improve the exposition.

\end{document}